\date{}
\theoremstyle{plain}
\newtheorem{theorem}{Theorem}
\newtheorem*{acknowledgment}{Acknowledgment}
\theoremstyle{definition}
\theoremstyle{remark}
\newtheorem{remark}{Remark}
\def\N{{\mathbb N}}
\newcommand{\Aeul}{\EuScript{A}}
\newcommand{\Beul}{\EuScript{B}}
\title{An asymptotic version of the union-closed sets conjecture}
\author{Luca Studer}
\begin{document}

\begin{abstract} We show that the biggest possible average set size in the complement $2^{\{1,2,\ldots, n\}} \setminus  \Aeul$ of a union-closed family $\Aeul \subset 2^{\{1,2, \ldots, n\}}$ is $\tfrac{n+1}{2}$. 
With the same proof we get a sharp upper bound for the average frequency in complements of union-closed families. This implies an asymptotic version of the union-closed sets conjecture, formulated in terms of complements of union-closed families. 
\end{abstract}

\maketitle

Let $n \in \N$, $[n] =\{1,2, \ldots, n\}$ and let $2^{[n]}=\{A: A \subset [n]\}$ be the power set on $n$ elements. A family $\Aeul \subset 2^{[n]}$ is called \textit{union-closed} if $A,B \in \Aeul$ implies $A \cup B \in \Aeul$. 
The union-closed sets conjecture asserts that if $\Aeul \subset 2^{[n]}$ is union-closed, then 
there is $k \in [n]$ such that $|\{A \in \Aeul: k \in A\}|/|\Aeul| \geq \frac{1}{2}$; or formulated in terms of the complement $\Beul \coloneqq 2^{[n]} \setminus \Aeul$ of a union-closed familiy $\Aeul \subset 2^{[n]}$, the conjecture states that there is 
$k \in [n]$ such that $|\{B \in \Beul: k \in B\}|/|\Beul| \leq \frac{1}{2}$ (for a survey article on the conjecture see~\cite{Bruhn and Schaudt}). We show that asymptotically the latter formulation is true, even when the minimum of $|\{B \in \Beul: k \in B\}|$ over $k \in [n]$ is replaced by the average
$$\mu(\Beul)\coloneqq \frac{1}{n} \sum_{k=1}^n |\{B \in \Beul: k \in B\}|.$$
\begin{theorem}
\label{1}
If $\Beul =2^{[n]}\setminus \Aeul$ is the complement of a union-closed family $\Aeul\subset 2^{[n]}$, then 
\begin{enumerate}
\item[(i)] $\sum_{B \in \Beul} |B| \leq \frac{n+1}{2} |\Beul|,$
\item[(ii)] $\mu(\Beul) \leq \frac{n+1}{2n}|\Beul|.$
\end{enumerate}
In particular, if $n_l$, $l \in \N$ is a positive integer sequence and $\Aeul_l \subset 2^{[n_l]}$ is a sequence of union-closed families with $\Aeul_l\not =\Aeul_{l'}$ for $l\not = l'$, then the complements $\Beul_l=2^{[n_l]}\setminus \Aeul_l$ satisfy $$\limsup_{l \to \infty} \frac{\mu(\Beul_l)}{|\Beul_l|} \leq \frac{1}{2}.$$
\end{theorem}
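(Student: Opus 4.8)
The plan is to prove part~(i) by a short double counting, obtain~(ii) from it for free, and then read off the limit statement. First I would record that~(i) and~(ii) are literally the same inequality: swapping the order of summation gives $n\,\mu(\Beul)=\sum_{k=1}^{n}|\{B\in\Beul: k\in B\}|=\sum_{B\in\Beul}|B|$, so~(ii) is~(i) divided by $n$. Hence it suffices to show $2\sum_{B\in\Beul}|B|\le(n+1)|\Beul|$.

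To do this I would count the set $P=\{(B,k): B\in\Beul,\ k\in B\}$, whose cardinality is $\sum_{B\in\Beul}|B|$, by splitting each pair according to whether $B\setminus\{k\}$ lands in $\Beul$ or in $\Aeul$:
\[
\sum_{B\in\Beul}|B|=\#\{(B,k)\in P: B\setminus\{k\}\in\Beul\}+\#\{(B,k)\in P: B\setminus\{k\}\in\Aeul\}.
\]
For the first summand, $(B,k)\mapsto(B\setminus\{k\},k)$ is injective with image contained in $\{(B',k): B'\in\Beul,\ k\notin B'\}$, so that summand is at most $\sum_{B\in\Beul}(n-|B|)$. For the second summand I would use that \emph{a fixed $B\in\Beul$ has at most one coordinate $k$ with $B\setminus\{k\}\in\Aeul$}: if $k\neq k'$ both had this property, then $(B\setminus\{k\})\cup(B\setminus\{k'\})=B$ would lie in $\Aeul$ because $\Aeul$ is union-closed, contradicting $B\in\Beul$; hence $(B,k)\mapsto B$ is injective on this set, which therefore has at most $|\Beul|$ elements. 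Adding the two bounds yields $\sum_{B\in\Beul}|B|\le\sum_{B\in\Beul}(n-|B|)+|\Beul|=(n+1)|\Beul|-\sum_{B\in\Beul}|B|$, which rearranges to~(i); dividing by $n$ gives~(ii).

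For the last assertion, the key remark is that $\Aeul_l\neq\Aeul_{l'}$ for $l\neq l'$ forces $n_l\to\infty$, because for each fixed $N$ there are only finitely many union-closed families contained in $2^{[N]}$, hence only finitely many indices with $n_l\le N$. Part~(ii) then gives $\mu(\Beul_l)/|\Beul_l|\le\frac{n_l+1}{2n_l}=\frac12+\frac1{2n_l}$ for every $l$ with $\Beul_l\neq\emptyset$ (with the convention $0/0=0$ when $\Beul_l=\emptyset$), and since $n_l\to\infty$ the right-hand side tends to $\frac12$, giving $\limsup_{l\to\infty}\mu(\Beul_l)/|\Beul_l|\le\frac12$. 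The entire argument is elementary, and I expect the one load-bearing step to be the injectivity of $(B,k)\mapsto B$ on the ``bad'' pairs — that is, recognizing that union-closedness of $\Aeul$ is precisely what makes that map injective; everything else, including the equivalence of~(i) and~(ii) and the remark that $n_l\to\infty$, is routine bookkeeping.
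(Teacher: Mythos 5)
Your proof is correct and follows essentially the same route as the paper's: your injective map $(B,k)\mapsto(B\setminus\{k\},k)$ plays the role of the paper's bijection between its sets $W$ and $U$, and your observation that each $B\in\Beul$ admits at most one $k$ with $B\setminus\{k\}\in\Aeul$ is exactly the paper's Remark~\ref{property}, which bounds $|V|\leq|\Beul|$. The remaining differences are only cosmetic bookkeeping in how the pairs $(B,k)$ are partitioned and recombined.
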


\begin{remark}
All inequalities in Theorem~\ref{1} are sharp as can be seen by considering the union-closed family $\Aeul=\{A \subset [n]: 1 \not \in A\}$ with complement $\Beul=\{B \subset [n]: 1 \in B\}$.
\end{remark}

Theorem~\ref{1} contrasts the fact that a similar weakening of the union-closed sets conjecture stated in terms of union-closed families (instead of their complements) seems very hard. Concretely, there are union-closed families $\Aeul \subset 2^{[n]}$ with $\mu(\Aeul)<\frac{1}{100}|\Aeul|$, and it is unknown if for every union-closed family $\Aeul \subset 2^{[n]}$ there is $k \in [n]$ with $|\{A \in \Aeul: k \in A\}| \geq \frac{1}{100}|\Aeul|$. The following remark is crucial for the given proof of Theorem~\ref{1}.

\begin{remark}
\label{property}
If $\Beul= 2^{[n]}\setminus \Aeul$ is the complement of a union-closed family $\Aeul \subset 2^{[n]}$, $B \in \Beul$ and $k,l \in B$ are distinct, then $B\setminus \{k\} \in \Beul$ or $B\setminus \{l\} \in \Beul$. Indeed, if $B \setminus \{k\}, B \setminus \{l\} \in \Aeul$, then 
the union $B=B \setminus \{k\} \cup B \setminus \{l\}$ is also in $\Aeul$ (and thus not in $\Beul$).
\end{remark}

\begin{remark}
Similarly to the recent work of Karpas~\cite{Karpas}, who showed that the union-closed sets conjecture holds for union-closed families $\Aeul\subset 2^{[n]}$ with $|\Aeul|\geq 2^{n-1}$, the given proof of Theorem~\ref{1} depends only on the property formulated in Remark~\ref{property}. 
\end{remark}

\begin{proof}[Proof of Theorem~\ref{1}] Define 
\begin{align*}
U & \coloneqq \{(B,k):B \in \Beul, k \in B, B \setminus  \{k\} \in \Beul\}, \\
V & \coloneqq \{(B,k):B \in \Beul, k \in B, B \setminus \{k\} \not \in \Beul\}, \\ 
W & \coloneqq \{(B,k):B \in \Beul, k \not \in B, B \cup \{k\} \in \Beul\}, \\
X & \coloneqq \{(B,k):B \in \Beul, k \not \in B, B \cup \{k\} \not \in \Beul\}.
\end{align*}
Note that $U,V,W,X$ are pairwise disjoint and 
$$U\cup V \cup W \cup X=\Beul \times [n].$$ We get $|U|+|V|+|W|+|X|=n|\Beul|$.
Moreover, $(B,k) \mapsto (B \cup \{k\}, k)$ defines a bijection $W \to U$. This gives $|W|=|U|$. Together we get
$$|U|+|V| = \frac{|U|+|W|}{2}+|V|=\frac{n|\Beul|-|V|-|X|}{2} +|V| = \frac{n|\Beul|+|V|-|X|}{2}\leq  \frac{n|\Beul|+|V|}{2}.$$
It follows directly from Remark~\ref{property} that $|V|\leq |\Beul|$, hence together with the last inequality
$$|U|+|V|\leq \frac{n+1}{2}|\Beul|.$$ Assertion (i) follows now from 
$$\sum_{B \in \Beul} |B|=|\{(B,k): B \in \Beul, k \in B\}|=|U \cup V|=|U|+|V|,$$ and similarly, assertion (ii) follows from 
$$n \mu(\Beul)=\sum_{k \in [n]} |\{B \in \Beul: k \in \Beul\}|=|\{(B,k): B \in \Beul, k \in B\}|=|U \cup V|=|U|+|V|.$$
To get the asymptotic result, note that for fixed $n \in \N$ there are at most finitely many distinct union-closed families on the ground set $[n]$ ($2^{2^n}$ is a trivial upper bound). Therefore, since $\Aeul_l \subset 2^{[n_l]}$, $l \in \N$ is a sequence of union-closed families without repetition, we have $n_l \to \infty$ as $l \to \infty$.
Together with (ii) we get 
$$\limsup_{l \to \infty} \frac{\mu(\Beul_l)}{|\Beul_l|}  \leq \limsup_{n \to \infty} \Big(\frac{1}{2}+\frac{1}{2n_l} \Big) =\frac{1}{2},$$
as desired.
\end{proof}

\begin{remark}
Alternatively, Theorem~\ref{1} can be proved building on Reimer's work about the average set size in union-closed families \cite{Reimer}. However, the above proof seemed more natural.
\end{remark}

\begin{acknowledgment}
\normalfont
I would like to thank Ilan Karpas and Sebastian Baader for valuable comments.
\end{acknowledgment}

\bigskip
\noindent
Facultad de Ciencias Matem\' aticas de la Universidad Complutense, Plaza Ciencias, 28040, Madrid, Spain

\smallskip
\noindent
\texttt{luca.studer@gmail.com}


\begin{thebibliography}{99} 

\bibitem{Bruhn and Schaudt}
H.~Bruhn, O. Schaudt: \emph{On the journey of the union-closed sets conjecture}, \\
Graphs and Combinatorics~ 31 (2015), 2043--2074.

\bibitem{Karpas}
I.~Karpas: \emph{Two results on union-closed families}, \\ 
preprint: https://arxiv.org/abs/1708.01434

\bibitem{Reimer}
D.~Reimer: 
\emph{An average set size theorem}, 
Combinatorics, Probability and Computing~12 (2003), 89--93.
\end{thebibliography}
\end{document}